\newtheorem{theorem}{Theorem}
\newtheorem{proposition}{Proposition}
\newtheorem{corollary}{Corollary}
\theoremstyle{definition}
\newtheorem{definition}{Definition}
\newtheorem{remark}{Remark}
\renewcommand{\leq}{\leqslant}
\DeclareMathOperator{\Aut}{Aut}
\DeclareMathOperator{\Stab}{Stab}
\DeclareMathOperator{\skd}{sk.dim}
\DeclareMathOperator{\Gal}{Gal}
\DeclareMathOperator{\Spec}{Spec}
\newenvironment{enumerate*}%
  {\begin{enumerate}%
    \setlength{\itemsep}{1pt}%
    \setlength{\parskip}{1pt}}%
  {\end{enumerate}}
\title{Geometry of symmetric spaces of type EIII}
\author{Viktor A. Petrov\footnote{St. Petersburg State University, St. Petersburg, Russia}\ \footnote{\texttt{victorapetrov@googlemail.com}}, Andrei V. Semenov\footnote{St. Petersburg State University, St. Petersburg, Russia}\ \footnote{\texttt{asemenov.spb.56@gmail.com}}}
\begin{document}

\maketitle

\begin{abstract}
    In this paper we generalize Atsuyama's result on the geometry of symmetric spaces of type EIII to the case of arbitrary fields of characteristic not 2 or 3. As an application we prove a variant of ``Chain lemma'' for microweight tori in groups of type $E_6$.
    
\emph{Keywords:} symmetric spaces, Brown algebra, microweight tori
\end{abstract}

\section{Introduction}
Symmetric spaces are one of the objects of main interest in differential geometry. Boris Rosenfeld noticed that many symmetric spaces can be realized as elliptic planes over the tensor product of two composition algebras (see \cite{Ros}). Vinberg and Atsuyama counted the number of lines through two points in the general position independently \cites{A,V}. Moreover, Atsuyama identified the variety of the lines through two points in the special position for symmetric spaces of types EIII, EVI, and EVIII (see \cite{A}).

It is important that one can develop symmetric spaces not only over $\mathbb{R}$ or $\mathbb{C}$, but over any field of characteristic not two. In this case they are related to involutions on simple algebraic groups, see the papers by Helminck, Richardson, Springer (\cites{Hel, R, S} respectively) and Hutchens and Hunnel (\cites{H, Hu}).

Our main goal is to generalize Atsuyama's result to the case of arbitrary fields. Note that the partial case of the space EIII with a split group of type $E_6$ was considered by Veldkamp and Springer (see \cite{SV}) even before works of Atsuyama. To achieve the result they used direct calculations in the Albert algebra. We obtain a generalization of their results in the case of the space EIII for an arbitrary group of type $E_6$ with trivial Tits algebras and for an arbitrary field of characteristic not 2 and not 3. We use mostly geometric considerations (namely, the Chernousov--Merkurjev filtration on the product of two projective homogeneous varieties and geometry of subalgebras in non-associative algebras) and avoid lengthy calculations.

As an application we prove a variant of ``Chain lemma'' for microweight tori in a group of type $E_6$. We intend to use this lemma to constuct new cohomological invariants of Brown algebras in the same spirit as it is done in \cite{PR} for Albert algebras.

%Let $F$ be a field of characteristic 0 and let $L$ be its quadratic extension, let ${}_{F} A = H_3(\mathbb{O})$ be 27-dimensional jordan algebra, and define $B(A)$ as its Brown algebra. Let points be a quaternion subalgebras of $B(A)$ and let lines be some algebras constructed using 10-dimensional subalgeras of $B(A)$. We say that a point incident a line if one is a subset of the other. Then we have

%\begin{theorem} In the case of EIII for any algebraic group of type $E_6$ following statements holds:
%\begin{enumerate}
%\item For any points $A, B$ in the general position there exists a unique line $G$ such that $A, B \in G$.
%\item For any points $A, B$ in the special position the variety of lines through $A$ and $B$ isomorphic to $\mathbb{P}^4$.
%\end{enumerate}
%\end{theorem}

\section{Generalities}

\subsection{Albert algebras}
Let $F$ be a field of characteristic not 2 or 3.

\begin{definition}
An {\it Albert algebra} $A$ over a field $F$ is a central simple exceptional Jordan algebra, such that $\dim_F A = 27$.
\end{definition}

For every such $A$ one can find a cubic norm map $N: A \to F$ and a linear trace map $T: A \to F$. The trace map induces a non-degenerate symmetric bilinear form $T(x,y):= T(xy)$ on $A$.

For any cubic map $f: X \to Y$ one can write
$$
f (\sum_i t_i x_i) = \sum t_i^3 f(x_i) + \sum_{i \not= j} t_i^2t_j f(x_i,x_j) + \sum_{i < j< k} t_i t_j t_k f(x_i,x_j,x_k),
$$
so for any $x$ we can express $N(x+ty)$ in this way and obtain a linear map $f_x : A \to F$ by considering the coefficient of $t$ for a fixed $x \in A$.
\begin{definition}

For every $x\in A$ we define an element $x^{\#} \in A$ such that $T(x^{\#}, y) = f_x(y)$ for all $y \in A$. Note that such an element
exists and unique since $T$ is nondegenerate.
\end{definition}

It is easy to see that the map $\#: A \to A$ is quadratic, so we can define its linearization $\times : A\times A \to A$.
\begin{definition}
The {\it Freudenthal cross product} is a linearization of the map $\#$, given by the formula 
$$
x \times y = (x+y)^{\#} - x^{\#} - y^{\#}.
$$
\end{definition}

%\begin{definition} For any $x\in A$ let $U_x$ be the map, defined by formula $U_x y = T(x,y) x - x^\# \times y$.
%\end{definition}
%Now one can consider trilinear form on $A$: for all $x, y, z  \in A$ we define
%\[ \{x,y,z \} = T(x,y)z + T(y,z)x - (z \times x) \times y = (U_{x+z} - U_x - U_z) y.\]
%\begin{definition} We call an $F$-linear homomorphism $f: A \to A$ a {\it norm similarity}, if there exists $\lambda \in F^*$ such that $N(f(a)) = \lambda N(a)$ for all $a \in A$. Such $\lambda$ will be called the {\it multiplier} of $f$, and if $\lambda = 1$, then we call $f$ a {\it norm isometry}.
%\end{definition}
%\begin{definition}
%Let $\Inv(A)$ be the algebraic group whose $F$-points are the norm isometries $f$ for an Albert algebra $A$.
%\end{definition}

%\begin{proposition}
%\begin{enumerate} 
%\item Let $G$ be simple simply connected group of type ${}^1 E_6$ over $F$ with trivial Tits algebras. Then there exists an Albert algebra $A$ over $F$, such that $\Inv(A)$ $\simeq G$.
%\item For any Albert algebra $A$ the group $\Inv(A)$ is simple simply connected algebraic group of type ${}^1 E_6$ over $F$. 
%\end{enumerate}
%\end{proposition}

%\begin{proof}
%The first statement comes from Theorem 1.4 in \cite{G}, and the second statement is a well-known fact (see, for example, \cite{Fr}).
%\end{proof}

\subsection{Brown algebras}
Let $A$ be an Albert algebra over $F$. Consider a structurable algebra $B(A, F \times F)$ by setting $B$ to be the vector space $\begin{pmatrix} F, & A \\ A, & F \end{pmatrix}$ with multiplication map defined by
$$
\begin{pmatrix} \alpha_1, & j_1 \\ j_1', & \beta_1 \end{pmatrix} \cdot \begin{pmatrix} \alpha_2, & j_2 \\ j_2', & \beta_2 \end{pmatrix} = \begin{pmatrix} \alpha_1 \alpha_2 + T(j_1,j_2'), & \alpha_1j_2 +\beta j_1 + j_1' \times j_2' \\ 
\alpha_2 j_1' + \beta_1 j_2' + j_1 \times j_2, & \beta_1\beta_2 + T(j_2, j_1') \end{pmatrix}. 
$$
Also define the involution on $B$ by the formula $\overline{\begin{pmatrix} \alpha, & j_1 \\ j_1', & \beta \end{pmatrix}} = \begin{pmatrix} \beta, & j_1 \\ j_1', & \alpha \end{pmatrix}$. This leads us to a central simple structurable algebra over $F$ (see \cite{G}).

%Now consider a quadratic extension $K$ of $F$ with the unique non-trivial automorphism $i$ of $K$. One can define an automorphism $j$ on $B(A, F\times F, 1)$, such that
%\[ j \begin{pmatrix} \alpha, & j_1 \\ j_2, & \beta \end{pmatrix} = \begin{pmatrix} \beta, & j_2 \\ j_1, & \alpha \end{pmatrix}
%\]
%\begin{definition}
%Let $B(A,K)$ be the $F$-subalgebra of $B(A, F \times F, 1) \otimes_F K$ fixed by $j \otimes i$. 
%\end{definition}
%Note that $B(A,K)$ is a central structurable algebra, since
%\[B(A,K) \otimes K \simeq B(A \otimes K, K \times K, 1).\]

\begin{definition}
Let $F_s$ be the separable closure of $F$. A structurable algebra $(B,-)$ is said to be a {\it Brown algebra} if $(B,-) \otimes F_s \simeq B^d \otimes F_s$, where $B^d = B(A^d, F \times F)$ is the split Brown algebra. 
\end{definition}
The space of skew-symmetric elements is one-dimensional (we write $\skd B =1$) and is spanned by a single element $s_0 \in B$ such that $s_0^2 \in F^*$. We call $B$ a Brown algebra of type 1, if $s_0^2$ is a square in $F$, and we call $B$ a Brown algebra of type 2 otherwise.
\begin{proposition}[Theorem 2.9 in \cite{G}]
If $B$ is a Brown algebra of type $i\in \{1,2\}$ over $F$, then the connected component $\Aut^+(B)$ of the group of automorphisms is a simple simply connected group of type ${}^i E_6$ with trivial Tits algebras, and any such group can be obtained this way.
\end{proposition}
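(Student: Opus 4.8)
This is Theorem~2.9 of \cite{G}; the plan is to pin down the split algebra $B^{d}$ completely and then pass to an arbitrary Brown algebra by Galois descent. First I would analyse $\Aut(B^{d})$ directly. An automorphism of $(B^{d},-)$ preserves the product and the involution, hence fixes $F\cdot 1$ and stabilises the line $Fs_{0}$ of skew elements; since $s_{0}^{2}\in F^{*}$ it must send $s_{0}$ to $\pm s_{0}$, and on the identity component to $s_{0}$ itself, so $\Aut^{+}(B^{d})$ commutes with left and right multiplication by $s_{0}$. The decomposition of $B^{d}$ into joint eigenspaces of $L_{s_{0}}$ and $R_{s_{0}}$ is $B^{d}=F\oplus A^{d}\oplus A^{d}\oplus F$ --- the two diagonal lines and the two off-diagonal copies of the split Albert algebra --- and all four summands are $\Aut^{+}(B^{d})$-stable. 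Inspecting the remaining components of the product (the Freudenthal cross product carries a pair of ``upper'' Albert elements to a ``lower'' one, and the trace form feeds the two copies back into the diagonal) shows that an automorphism is determined by its restriction $g$ to the upper copy of $A^{d}$, that $g$ must satisfy $N(gx)=N(x)$ for all $x$, and that conversely every such $g$ extends. Hence $\Aut^{+}(B^{d})$ is identified with the norm-isometry group $\{g\in\mathrm{GL}(A^{d}):N(gx)=N(x)\}$ of the split Albert algebra, which is the split simply connected group of type $E_{6}$; the simply connected form is also forced since the central $\mu_{3}$ already acts faithfully, with opposite characters on the two copies of $A^{d}$. Finally the transpose $\begin{pmatrix}\alpha & j\\ j' & \beta\end{pmatrix}\mapsto\begin{pmatrix}\beta & j'\\ j & \alpha\end{pmatrix}$ is an automorphism interchanging $s_{0}$ with $-s_{0}$ and swapping the two copies of $A^{d}$, hence inducing the outer automorphism, so that $\Aut(B^{d})\cong E_{6}^{\mathrm{sc}}\rtimes\mathbb{Z}/2\mathbb{Z}$.

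With the split case in hand I would invoke descent. By definition Brown algebras are the twisted forms of $B^{d}$, so they are classified by $H^{1}(F,\Aut(B^{d}))=H^{1}(F,E_{6}^{\mathrm{sc}}\rtimes\mathbb{Z}/2\mathbb{Z})$; if $\xi$ is a cocycle representing $B$, then $\Aut^{+}(B)={}_{\xi}\Aut^{+}(B^{d})={}_{\xi}E_{6}^{\mathrm{sc}}$ is a simply connected group of type $E_{6}$. Two points remain. For the type, the image of $\xi$ under $H^{1}(F,\Aut(B^{d}))\to H^{1}(F,\mathbb{Z}/2\mathbb{Z})=F^{*}/F^{*2}$ is the class of $s_{0}^{2}$, so it is trivial precisely for Brown algebras of type $1$; hence ${}_{\xi}E_{6}^{\mathrm{sc}}$ is an inner form, of type ${}^{1}E_{6}$, exactly in that case, and of type ${}^{2}E_{6}$ otherwise. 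For the Tits algebras, $\xi$ takes values in $E_{6}^{\mathrm{sc}}\rtimes\mathbb{Z}/2\mathbb{Z}$, which surjects onto $\Aut(E_{6}^{\mathrm{sc}})$ with kernel the central $\mu_{3}$ of $E_{6}^{\mathrm{sc}}$, so the class of $\Aut^{+}(B)$ in $H^{1}(F,\Aut(E_{6}^{\mathrm{sc}}))$ lifts to $H^{1}(F,E_{6}^{\mathrm{sc}}\rtimes\mathbb{Z}/2\mathbb{Z})$; by the standard identification of the Tits class with the obstruction to such a lift, its Tits class vanishes, so all Tits algebras are trivial. Conversely, a simply connected $G$ of type ${}^{i}E_{6}$ with trivial Tits algebras has, for the same reason, a class in $H^{1}(F,\Aut(E_{6}^{\mathrm{sc}}))$ that lifts to $H^{1}(F,E_{6}^{\mathrm{sc}}\rtimes\mathbb{Z}/2\mathbb{Z})=H^{1}(F,\Aut(B^{d}))$, and twisting $B^{d}$ by such a lift yields a Brown algebra $B$ of type $i$ with $\Aut^{+}(B)\cong G$.

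The genuinely hard step is the first one: showing that $\Aut^{+}(B^{d})$ is \emph{exactly} the split simply connected group of type $E_{6}$, neither a proper subgroup nor a group of a different isogeny class. Once this, together with the description $\Aut(B^{d})\cong E_{6}^{\mathrm{sc}}\rtimes\mathbb{Z}/2\mathbb{Z}$, is available, the rest is routine cohomological bookkeeping. An efficient way to carry out the first step is to compute the Lie algebra $\mathrm{Der}(B^{d})$ from the standard structurable-algebra formulae, recognise it as the split Lie algebra of type $E_{6}$ (of dimension $78$) acting on $F\oplus A^{d}\oplus A^{d}\oplus F$ with the expected weights, and then use the faithfulness of the central $\mu_{3}$ to single out the simply connected form --- which is the route of \cite{G}.
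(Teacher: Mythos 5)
This proposition is stated in the paper without proof --- it is imported verbatim as Theorem~2.9 of \cite{G} --- so there is no in-paper argument to compare against; the relevant comparison is with Garibaldi's proof, and your sketch reconstructs it essentially correctly. The split-case analysis (fixing $s_0$ on the identity component, the joint eigenspace decomposition $F\oplus A^d\oplus A^d\oplus F$ of $L_{s_0}$ and $R_{s_0}$, and the identification of the $s_0$-fixing automorphisms with $\Inv(N)=\{g: N(gx)=N(x)\}$ via $h=(g^*)^{-1}$) is exactly Garibaldi's isomorphism $\Aut(B(A^d,F\times F))\simeq \Inv(N)\rtimes\mathbb{Z}/2$, and the descent step via $H^1(F,E_6^{\mathrm{sc}}\rtimes\mathbb{Z}/2)$ with the Tits class as the obstruction to lifting from $H^1(F,\Aut(E_6^{\mathrm{sc}}))$ is also his route. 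Two small caveats: you silently use that $\Inv(N)$ is connected (needed to conclude $\Aut^+(B^d)$ is all of $\Inv(N)$ and not a proper subgroup) and that ``$N(gx)=N(x)$'' is equivalent to compatibility of $(g,(g^*)^{-1})$ with $\times$, which requires the Euler identity $T(x^\#,x)=3N(x)$ and hence characteristic not $3$; and your closing remark misattributes the method --- Garibaldi does not recompute $\mathrm{Der}(B^d)$ but cites the classical Chevalley--Schafer/Springer identification of $\Inv(N)$ for the split Albert algebra with split simply connected $E_6$. Neither point is a genuine gap.
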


\subsection{Weil restriction}
Let $S_1, S_2$ be schemes and let $f: S_1 \to S_2$ be a morphism of schemes. 
For an $S_1$-scheme $X_1$ consider the functor 
$$R_{S_1/S_2} (X_1): (Sch/S_2)^{op} \longrightarrow Sets,$$
such that $R_{S_1/S_2} (X_1) (T) = X_1(T \times_{S_2} S_1)$.

\begin{definition}
If $R_{S_1/S_2} (X_1)$ is representable by an $S_2$-scheme $X_2$ we say that $X_2$ is a {\it Weil restriction} of $X_1$ along $f$.
\end{definition}

By definition the Weil restriction functor $R_{S_1/S_2}$ is adjoint to the fiber product functor $T\mapsto T\times_{S_2}S_1$.

In the case of $S_1 = \Spec(L)$ and $S_2 = \Spec(F)$ we simply write $R_{L/F}$ for a field $F$ and its finite extension $L$. If $X$ is an affine $L$-variety defined by
$$
X = \Spec \big(L[x_1, \dots, x_n]/ (f_1, \dots, f_m) \big),
$$
we can represent $R_{L/F}(X)$ by $\Spec \big( F[y_{i,j}]/(g_{k,l}) \big)$, where $x_i = \sum_j y_{i,j} e_j$ and $f_k = \sum_l g_{k,l} e_l$ for an $F$-basis $\{e_i\}$, $i=1,\ldots d$, of $L$.

\section{Description of EIII}
Let $F$ be a field of characteristic not $2$ or $3$ and let $B$ be a Brown algebra over $F$ with skew-hermitian elements spanned by $s_0$. Denote by $K$ the quadratic extension of $F$ generated by the square root of $s_0^2$ (as an element of $F$). As above, by
$G=\Aut(B)^+$ we denote the connected component of the automorphisms group of $B$, which is of type ${}^2E_6$. $B_K$ becomes of type $1$ over $K$ and so it becomes isomorphic to $B(A,K\times K)$ for some Albert algebra $A$ over $K$.

Recall that a semisimple group $G$ is called {\bf isotropic} if there exists $\mathbb{G}_m \leq G$ (or, which is the same, if there exists a proper parabolic subgroup $P \leq G$ defined over $F$) and {\bf anisotropic} otherwise.

Let $\mathcal{Q}$ be the set of all quaternion subalgebras (in the sense of algebras with involution) in $B$.

\begin{proposition}\label{prop:Points}
The set $\mathcal{Q}$ can be viewed as the set of the $F$-rational points of an affine open subvariety
of the Weil restriction $R_{K/F}(G_K/P_1)$, where $P_1$ is a maximal parabolic subgroup of type $1$. This subvariety is a twisted form of {\mbox{$E_6/D_5\cdot{\mathbb G}_m$}}. Moreover, if $G$ is anisotropic, then the complement to this open subvariety has no $F$-rational points.
\end{proposition}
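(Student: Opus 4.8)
\medskip
\noindent\emph{Plan of proof.} The idea is to exhibit $\mathcal{Q}$ as a single orbit of $G$, to identify the stabilizer of a point with the Levi subgroup of type $D_5\cdot\mathbb{G}_m$ of a microweight parabolic, and then, via the unit of the Weil restriction adjunction, to embed this orbit into $R_{K/F}(G_K/P_1)$ as its open dense $G$-orbit. First, $\mathcal{Q}$ is the set of $F$-points of a locally closed subscheme $\mathcal{V}\subseteq\mathrm{Gr}(4,B)$: ``being a subalgebra'' and ``being stable under $-$'' are closed conditions on a $4$-dimensional subspace, while ``the induced algebra with involution is a quaternion algebra'' (equivalently, the subalgebra is Azumaya of rank $4$) is an open one; in characteristic $\neq 2,3$ one verifies $\mathcal{V}$ is smooth. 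The group $G=\Aut(B)^+$ acts on $\mathcal{V}$ since an automorphism of $(B,-)$ carries a quaternion subalgebra with involution to another one. A useful preliminary observation is that every $(Q,-)\in\mathcal{Q}$ contains $s_0$: a quaternion algebra admits only involutions of the first kind and $\mathrm{Skew}(B,-)=Fs_0$ is one-dimensional, so the restricted involution on $Q$ would have trivial skew part unless $s_0\in Q$; hence it is orthogonal, $\mathrm{Skew}(Q,-)=Fs_0$, and $K=F[s_0]\subseteq Q$.

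\smallskip
The main step is to pass to $F_s$, where $B\otimes F_s\cong B(A^d,F_s\times F_s)$, and to show that $G(F_s)$ acts transitively on $\mathcal{Q}(F_s)$ with stabilizer a reductive subgroup $L\cong D_5\cdot\mathbb{G}_m$ --- the common Levi of a maximal parabolic $P_1$ of type $1$ and of its opposite $P_1^{-}$, which is of type $6$ (as $-w_0$ is the diagram automorphism of $E_6$). Concretely, after a suitable identification $B\otimes F_s\cong B(A^d,F_s^2)$ one may take a base point in ``block form'' $\left(\begin{smallmatrix}\alpha & \lambda v\\ \mu w & \beta\end{smallmatrix}\right)$ with $v,w$ rank-one elements of $A^d$ (so $v^{\#}=w^{\#}=0$) and $T(v,w)\neq 0$; then the Peirce decomposition attached to the idempotents of $Q$, together with the description of $\Aut(B(A^d,F_s^2))^+$ from Theorem~2.9 of \cite{G}, lets one compute $\Stab_G(Q)$ and recognize it as $L$. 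Since $L$ is reductive, Matsushima's criterion gives that $G/L$ is affine; hence $\mathcal{V}\otimes F_s\cong(G/L)\otimes F_s$ and $\mathcal{V}$ is an affine $F$-form of $E_6/D_5\cdot\mathbb{G}_m=E_6/L$, twisted by the cocycle defining $G$ --- which, $G$ being of type ${}^2E_6$, also acts via the diagram automorphism interchanging $L_1$ and $L_6$.

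\smallskip
Next, a reductive subgroup of type $D_5\cdot\mathbb{G}_m$ in $E_6$ is the common Levi of exactly two opposite parabolics, one of type $1$ and one of type $6$; so over $K$, where $G_K$ is of inner type $E_6$, the assignment $Q\mapsto(\text{the type-}1\text{ parabolic }\Pi_Q\text{ of }G_K\text{ with Levi }\Stab_G(Q)_K)$ is a $G$-equivariant $K$-morphism $\mathcal{V}_K\to G_K/P_1$, which by adjunction descends to a $G$-equivariant $F$-morphism $\phi\colon\mathcal{V}\to R_{K/F}(G_K/P_1)$; over $F_s$ it sends $Q$ to the pair $(\Pi_Q,\sigma(\Pi_Q))$, with $\sigma(\Pi_Q)=\Pi_Q^{-}$ because $\Stab_G(Q)$ is defined over $F$. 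Over $F_s$ one has $R_{K/F}(G_K/P_1)\cong(E_6/P_1)\times(E_6/P_6)$ with $G$ acting diagonally (and the part of $\Gal(F_s/F)$ not fixing $K$ acting by the swap composed with the diagram automorphism); this variety has exactly two $G$-orbits, the open one $\{(\Pi,\Pi^{-})\}\cong E_6/L$ and the incidence orbit $\cong E_6/P'$ with $P'$ of Levi type $D_4$. Thus $\phi$ maps $\mathcal{V}$ isomorphically onto the open orbit (both being homogeneous under $G$ with stabilizer $L$), so it is an open immersion, and the complement of $\mathcal{V}$ is the corresponding twisted form of the incidence variety.

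\smallskip
Finally, suppose $G$ is anisotropic. An $F$-point of the complement is a type-$1$ parabolic $\Pi$ of $G_K$, defined over $K$, such that $\Pi$ and $\sigma(\Pi)$ are not opposite; then they are contained in a common proper parabolic of $G_K$, and one may choose such a parabolic canonically --- for instance $R:=\langle\Pi,\sigma(\Pi)\rangle$, which is a (proper) parabolic subgroup of $G_K$ and is $\sigma$-stable since $\sigma(R)=\langle\sigma(\Pi),\Pi\rangle=R$. By Galois descent $R$ arises from a proper parabolic subgroup of $G$ defined over $F$, contradicting anisotropy; hence the complement has no $F$-points. (Alternatively one argues stratum by stratum via the Chernousov--Merkurjev filtration of $(G/P_1)\times(G/P_6)$ mentioned in the introduction, checking that the types of the parabolics occurring are preserved by the diagram automorphism.) I expect the transitivity-and-stabilizer step to be the main obstacle: this is exactly where Veldkamp--Springer \cite{SV} used explicit computations in the Albert algebra, and the aim here is to replace these by the structural identification $\mathcal{V}\cong E_6/(D_5\cdot\mathbb{G}_m)$ together with the geometry of subalgebras of $B$; a secondary technical point is to keep everything clean scheme-theoretically in characteristic $\neq 2,3$ (smoothness of $\mathcal{V}$, separability of the orbit map, $\phi$ an open immersion, affineness via Matsushima).
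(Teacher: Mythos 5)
Your main construction follows the paper's route: a quaternion subalgebra must contain $K$ because its involution is orthogonal with skew part $Fs_0$, and over $K$ it is encoded by rank-one elements $e$, $\sigma(e)$ with $T(e,\sigma(e))\neq 0$, i.e.\ by a pair of opposite parabolics of types $1$ and $6$ with common Levi of type $D_5\cdot\mathbb{G}_m$; this realizes $\mathcal{Q}$ as the $F$-points of the open orbit in $R_{K/F}(G_K/P_1)$. (Your extra scheme-theoretic scaffolding --- the locally closed subscheme of $\mathrm{Gr}(4,B)$, Matsushima for affineness --- is harmless and, if anything, more careful than the paper.) But the anisotropy step fails. You assert that if $\Pi$ and $\sigma(\Pi)$ are not opposite then they lie in a common proper parabolic, and you propose $R=\langle\Pi,\sigma(\Pi)\rangle$. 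Since $\Pi$ is a \emph{maximal} parabolic, the only proper parabolic containing it is $\Pi$ itself, and $\sigma(\Pi)$, being of type $6\neq 1$, is never contained in $\Pi$; hence $\langle\Pi,\sigma(\Pi)\rangle=G_K$ and your $R$ is never proper. The subgroup that actually descends is the intersection $\Pi\cap\sigma(\Pi)$, which is $\sigma$-stable and therefore defined over $F$; by \cite{DG}*{Expos\'e XXVI, Th\'eor\`eme 4.3.2 (iv)} it contains nontrivial unipotent elements exactly because the two parabolics are not opposite, and one then needs Tits's theorem \cite{T} --- this is precisely where the hypothesis $\operatorname{char}F\neq 2,3$ enters --- to conclude that a rational unipotent element forces $G$ to be isotropic. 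Your parenthetical ``stratum by stratum'' alternative is only a gesture and does not supply the missing step either.

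A secondary, non-fatal slip: over $F_s$ the variety $(E_6/P_1)\times(E_6/P_6)$ has \emph{three} $G$-orbits, not two --- see the second filtration in Proposition~\ref{prop:CM}, where besides the open orbit and the incidence variety $G/P_{1,6}$ there is a middle stratum fibered over $G/P_{5,6}$. This does not affect your open-immersion claim (the image is still the open orbit, identified by equivariance and equality of stabilizers), but your description of the complement as a single incidence orbit with Levi of type $D_4$ is incorrect; the complement is the twisted form of the closed set $Y$, which properly contains that orbit.
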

\begin{proof}
Since $\skd B = 1$, the involution on a quaternion subalgebra $H \in \mathcal{Q}$ must be orthogonal, so $H$ must contain $K$. Let us present $H=K\oplus Kx$, where $x$ is orthogonal to $1$ with respect to the trace form, and $x^2\in F$. Setting over $K$
$$
x=\begin{pmatrix}0&e\\e'&0\end{pmatrix}
$$
we see that $e\times e=0$, $e'\times e'=0$ and $T(e,e')\ne 0$. Since $Kx$ is stable under the action of the Galois group of $K$ over $F$, we see that $Fe'=F\sigma(e)$, where $\sigma$ is the (unique) nontrivial element of $\Gal(K/F) = \mathbb{Z}/2\mathbb{Z}$. Now $e\times e=0$ means that $\Stab(Ke)$ is a parabolic subgroup $P$ of type $P_1$ in $G_K$ (see, for example, \cite[Theorem~7.2]{G}), and the condition $T(e,\sigma(e))\ne 0$ is open and means that the image $\sigma(P)$ of $P$ is opposite to $P$. Now $G(K)$ acts transitively on the pairs of opposite parabolic subgroups. A stabilizer of this action is equal to the Levi subgroup of $P_1$, which is of type $D_5$.

If the complement has a rational point, then over $K$ there exists a parabolic subgroup $P$ of type $P_1$ such that $\sigma(P)$ is not opposite to $P$. Then $P\cap\sigma(P)$ is defined over the base field and by \cite[Expos\'e XXVI, Th\'eor\`em 4.3.2 (iv)]{DG} contains unipotent elements. Since the characteristic of the base field is neither $2$ nor $3$, the main result of \cite{T} tells us that $G$ is isotropic.
\end{proof}

\begin{remark}\label{rem:tori}
It follows from the proof that $\mathcal{Q}$ has another description as the variety of tori of the form $R_{K/F}^{(1)}(\mathbb{G}_m)$ inside $\Aut(B)^+$, such that their centralizers become Levi subgroups of parabolic subgroups of type $P_1$ over $K$ (following \cite{VN} we call such tori {\bf microweight}). Note that every such torus contains a unique nontrivial involution and can be reconstructed by this involution as the center of its fixed point subgroup. This relates our work with \cite{H}, cf., for example, \cite[Theorem~6.11(1)]{H}.
\end{remark}
Now consider the map $\pi$ that sends a quaternion subalgebra $H \in \mathcal{Q}$ to $K^\perp_H\cdot B$, where $K^\perp_H$ stands for the orthogonal complement to $K$ in $H$, and the product is point-wise.
\begin{proposition}\label{prop:Dual}
$\pi(H)$ is a $22$-dimensional subalgebra of $B$ for any $H \in \mathcal{Q}$. Moreover, $\pi$ defines a bijection between the open subvarieties of $R_{K/F}(G_K/P_1)$ and $R_{K/F}(G_K/P_6)$.
\end{proposition}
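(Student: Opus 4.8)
The plan is to carry out everything explicitly over $K$ and then descend, keeping in mind the following structural picture: because $G$ has type ${}^2E_6$, the Galois generator $\sigma$ of $\Gal(K/F)$ acts on the Dynkin diagram by the non‑trivial automorphism, so $\sigma$ induces a canonical isomorphism $R_{K/F}(G_K/P_1)\xrightarrow{\ \sim\ }R_{K/F}(G_K/P_6)$; concretely, for a type‑$P_1$ parabolic $P$ over $K$ the conjugate $\sigma(P)$ is of type $P_6$, and in the notation of the proof of Proposition~\ref{prop:Points}, if $P=\Stab(Ke)$ then $\sigma(P)=\Stab(Ke')$. The assertion to be proved is that $\pi$ realises this isomorphism, restricted to the ``opposite'' open loci, the $22$‑dimensional subalgebra $\pi(H)$ serving as a concrete avatar of the pair $(Ke,Ke')$, equivalently of $\sigma(P)$, equivalently of $H$ itself.

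The first sentence I would establish by direct computation. Present $H\otimes K\subset B(A,K\times K)$ as in the proof of Proposition~\ref{prop:Points}, so that $K\otimes_F K$ is the diagonal and $K^\perp_H\otimes K$ is spanned by $\left(\begin{smallmatrix}0&e\\0&0\end{smallmatrix}\right)$ and $\left(\begin{smallmatrix}0&0\\e'&0\end{smallmatrix}\right)$ with $e^{\#}=e'^{\#}=0$ and $T(e,e')\neq 0$. Multiplying these on the right by a general element of $B(A,K\times K)$ via the multiplication table and collecting the images gives $\pi(H)\otimes K$ as an explicit subspace of the shape $\left(\begin{smallmatrix}K&*\\ *&K\end{smallmatrix}\right)$ whose off‑diagonal pieces are built from $e\times A$ and $e'\times A$; the linearised adjoint identities for the cubic norm of $A$ — in particular $(e\times x)\times(e\times y)=T(e,x\times y)\,e$ when $e^{\#}=0$ — show this subspace is closed under the multiplication, and since $x\mapsto e\times x$ has $10$‑dimensional image (because $e$ has rank one) one reads off $\dim_F\pi(H)=22$. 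As $K^\perp_H$ and $B$ are defined over $F$, so is $\pi(H)$; this also shows $\pi$ is a morphism of varieties, with $\pi(\mathcal Q)$ lying in a Grassmannian of $22$‑planes.

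For the bijection I would argue $G$‑equivariantly. The assignment $H\mapsto\pi(H)$ is $G$‑equivariant, $\mathcal Q$ and $\pi(\mathcal Q)$ are single $G$‑orbits, and $\Stab_G(\pi(H))\supseteq\Stab_G(H)=L_H$, the common Levi of $P$ and $\sigma(P)$ (a form of the Levi of $P_1$, of type $D_5$ over $K$). It then suffices to prove the reverse inclusion, for which it is enough to recover $(Ke,Ke')$ — hence $P$, $\sigma(P)$, $L_H$, and $H=K\oplus K^\perp_H$ — from $\pi(H)$ alone. This can be done over $K$: the intersections of $\pi(H)\otimes K$ with the two copies of $A$ in $B(A,K\times K)$ are $e\times A$ and $e'\times A$, and by the identity displayed above the span of all Freudenthal products $v\times w$ with $v,w$ in one of these equals $Ke$ and with $v,w$ in the other equals $Ke'$. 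Hence $\Stab_G(\pi(H))=L_H$, so $\pi$ is a $G$‑equivariant isomorphism from $\mathcal Q$ onto $\pi(\mathcal Q)$. Finally one identifies $\pi(\mathcal Q)$ with the open subvariety of $R_{K/F}(G_K/P_6)$ of type‑$P_6$ parabolics $Q$ over $K$ with $\sigma(Q)$ opposite to $Q$, via $\pi(H)\mapsto\Stab(Ke')=\sigma(P)$; that this locus is open, and that its complement has no rational point when $G$ is anisotropic, follows verbatim from the argument of Proposition~\ref{prop:Points} with $P_6$ in place of $P_1$, and the composite $\mathcal Q\to\pi(\mathcal Q)\hookrightarrow R_{K/F}(G_K/P_6)$ is precisely the canonical twist isomorphism of the first paragraph.

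The main obstacle is the reconstruction step, equivalently the inclusion $\Stab_G(\pi(H))\subseteq L_H$: one must carry out the multiplication‑table computation precisely enough to see which ``special'' lines sit in which slot of $\pi(H)$, and then use the exact identities for the Freudenthal cross product on rank‑one elements of $A$ to single them out intrinsically — this is where, as in Veldkamp--Springer's treatment, the geometry of subalgebras of the Albert algebra does the real work. A secondary point requiring care is that the identification of $\pi(\mathcal Q)$ with the open subvariety of $R_{K/F}(G_K/P_6)$ is an isomorphism of $F$‑schemes and not merely a bijection on points; this follows once $\pi$ is known to be an equivariant morphism between homogeneous spaces with equal point stabilisers.
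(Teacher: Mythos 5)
Your proof follows essentially the same route as the paper's: both compute $\pi(H)_K$ as the matrix space with off-diagonal blocks $e'\times A$ and $e\times A$, obtain $22=2+10+10$ from the $10$-dimensionality of $e\times A$, identify $\Stab(e\times A)$ as a parabolic of type $P_6$, and get the bijection by reconstructing $Ke$ from $e\times A$. The paper simply cites Garibaldi (Lemmas~6.6, 6.7 and Theorem~7.2) for these three facts, whereas you re-derive them from the linearized adjoint identity $(e\times x)\times(e\times y)=T(e,x\times y)\,e$ and package the bijection as a $G$-equivariant comparison of stabilizers.
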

\begin{proof}
Using the notation from the proof of Proposition~\ref{prop:Points}, for any $H \in \mathcal{Q}$ we can associate a pair $(e,e') \in A\times A$. Now we see that over $K$
$$
\pi(H)_K=\begin{pmatrix}K& e'\times A\\e\times A&K\end{pmatrix}.
$$
The vector space $e\times A$ is known to be $10$-dimensional over $K$ (see, for example, \cite[Lemma~6.6]{G}). Its stabilizer under the action of $G$ is a parabolic subgroup of type $P_6$ (\cite[Theorem~7.2]{G}) and $Ke$ can be reconstructed from $e\times A$ (\cite[Lemma~6.7]{G}).
\end{proof}

Let us call quaternion subalgebras {\bf points} and $22$-dimensional subalgebras as in Proposition~\ref{prop:Dual}  {\bf lines}. We say that a point $H$ is {\bf incident} to a line $L$ if $H\le L$. In terms of the tori description this means that the corresponding tori $R_{K/F}(\mathbb{G}_m)$ (and the corresponding involutions) commute but do not coincide.

\section{Chernousov--Merkurjev filtration for $E_6$}
Let $G$ be a group of inner type $E_6$ over $K$, possessing parabolic subgroups of types $P_1$ and $P_6$. Let $P_{i,j}$ be the submaximal parabolic subgroup of $G$ (meaning that any proper overgroup of $P_{i,j}$ must coincide with $P_i$ or $P_j$) for $1 \leq i \not= j \leq 6$.
\begin{proposition}\label{prop:CM}
There are filtrations whose consecutive complements are affine bundles over projective homogeneous varieties, as shown on the picture:
$$
\xymatrix{
{G/P_6\times G/P_6}&{\mbox{\Large$\supset$}}\ar[d]^-{\mathbb{A}^8}&X&{\mbox{\Large$\supset$}}\ar[d]^-{\mathbb{A}^1}&{G/P_6}\\
&{G/P_{1,6}}&&{G/P_{5,6};}
}
$$
$$
\xymatrix{
{G/P_6\times G/P_1}&{\mbox{\Large$\supset$}}\ar[d]^-{\mathbb{A}^{16}}&Y&{\mbox{\Large$\supset$}}\ar[d]^-{\mathbb{A}^5}&{G/P_{1,6}}\\
&{G/P_6}&&{G/P_{5,6}.}
}
$$
The affine bundle maps are given by the rule $(P, Q) \mapsto (P\cap Q)\cdot R_u(P)$ (where $P$ is of type $P_6$ and $Q$ is of type $P_6$ or $P_1$ respectively, and $R_u$ stands for the unipotent radical).
\end{proposition}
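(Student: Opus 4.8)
The plan is to produce the two filtrations by the same method, namely by analysing the relative position of the parabolic subgroups $P$ (of type $P_6$) and $Q$ (of type $P_6$ or $P_1$) over the algebraic closure. First I would pass to the split group $\bar G$ over $\bar K$ and recall that the $G$-orbits on $G/P_6 \times G/Q$ are in bijection with the double cosets $W_{P_6} \backslash W / W_Q$, where $W$ is the Weyl group of $E_6$ and $W_{P_6}$, $W_Q$ the parabolic subgroups generated by the simple reflections indexing $P_6$ and $Q$; here $W_{P_6} \cong W_Q$ is of type $D_5$. I would enumerate these double cosets explicitly (there are three in each case) and, for each orbit representative $w$, identify the stabilizer and compute the dimension of the orbit, so as to see which orbit is open (dense), which is the ``intermediate'' one, and which is the closed one $G/P_6$ (resp.\ $G/P_{1,6}$). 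The closed orbit is the diagonal-type orbit where $P \cap Q$ is again parabolic; its image under $(P,Q) \mapsto (P \cap Q)\cdot R_u(P) = P$ is all of $G/P_6$ in the first case, and the set of type-$P_6$ parabolics containing a common type-$P_{1,6}$ subgroup, i.e.\ $G/P_{1,6}$, in the second.

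Next I would verify that on each non-closed orbit the map $(P,Q)\mapsto (P\cap Q)\cdot R_u(P)$ is a well-defined morphism onto the indicated base ($G/P_{1,6}$ or $G/P_{5,6}$ or $G/P_6$) and that its fibers are affine spaces of the claimed dimension. Concretely, fixing a point in the base corresponds to fixing a parabolic $P$ (of type $P_6$, $P_{1,6}$, or $P_6$ respectively) together with extra combinatorial data; the fiber is then the set of $Q$ in the prescribed relative position with respect to $P$, and this is a homogeneous space under $R_u(P)$ (or a Levi-twisted version), hence an affine space. The dimension count is a root-combinatorics computation: it is the number of roots sent to the ``wrong'' side by the relevant Weyl element, which one reads off from the $E_6$ root system. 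I would check the numbers $\mathbb{A}^8$, $\mathbb{A}^1$, $\mathbb{A}^{16}$, $\mathbb{A}^5$ against the known dimensions: $\dim G/P_6 = 16$, $\dim G/P_1 = 16$, $\dim G/P_{1,6} = 25$, $\dim G/P_{5,6} = 20$, so e.g.\ $\dim(G/P_6 \times G/P_6) = 32 = 25 + \text{(fiber of open part)}$ forces the open fiber to have dimension $8$ mod the stratification, and similarly $16+1 = 20-3$-type bookkeeping pins down the intermediate stratum. Finally I would note that everything in sight is defined over $K$ and $G$-equivariant — the orbits are Galois-stable because the combinatorial data (types of parabolics) is — so the stratification descends from $\bar K$ to $K$.

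The main obstacle I expect is the explicit identification of the intermediate stratum $X$ (resp.\ $Y$) and of the base of its affine bundle: one must check that the stabilizer of the middle $W$-double-coset representative, after applying $(P,Q)\mapsto (P\cap Q)R_u(P)$, lands exactly on $G/P_{5,6}$ and not on some other submaximal parabolic, and that the resulting map is an $\mathbb{A}^1$- (resp.\ $\mathbb{A}^5$-) bundle rather than merely having affine-space fibers of that dimension. This requires a careful choice of orbit representatives inside $N_G(T)$, tracking which simple roots survive in $P \cap Q$, and recognising the type of the resulting parabolic from its root subsystem. Once the three orbits and their stabilizers are written down explicitly in terms of the $E_6$ Dynkin diagram, the affine-bundle structure follows from the standard fact that for parabolics $P \supseteq P'$ the variety of parabolics of a fixed type in ``general position'' relative to $P'$ is an affine bundle over the flag variety of $P'$, via the Bruhat-type decomposition of $R_u(P')$.
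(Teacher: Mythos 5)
Your approach is essentially the paper's: the proof there consists of invoking Chernousov--Merkurjev (\S 4--5 of \cite{CM}) for the bijection between $G$-orbits on $G/P\times G/Q$ and double cosets $W_P\backslash W/W_Q$ together with the resulting affine-bundle stratification, and then noting that the explicit double-coset computation can be read off the Hasse diagram of the minuscule representation $V(\varpi_6)$ by cutting the edges labelled $6$ (resp.\ $1$), as in \cite{PSV}. Your plan to enumerate the three double cosets and identify the strata by root combinatorics is just an unpacking of that citation, and the structural points you worry about (that the map $(P,Q)\mapsto (P\cap Q)R_u(P)$ is an affine bundle, not merely a map with affine fibers, and that the middle stratum lands on $G/P_{5,6}$) are exactly what the cited result supplies.

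One concrete correction: your dimension bookkeeping is wrong. Removing nodes $1$ and $6$ from the $E_6$ diagram leaves a Levi of type $D_4$, so $\dim G/P_{1,6}=36-12=24$, not $25$; removing nodes $5$ and $6$ leaves $A_4$, so $\dim G/P_{5,6}=36-10=26$, not $20$. With your numbers the check for the open stratum reads $32=25+8$, which is false; with the correct ones everything is consistent: $24+8=32$ for the open stratum of $G/P_6\times G/P_6$, $16+16=32$ for the open stratum of $G/P_6\times G/P_1$, and the middle strata have dimensions $26+1=27$ and $26+5=31$ respectively, properly sandwiched between the open and closed ones. These slips do not affect the validity of the method, but as written the sanity check would appear to refute the very statement you are proving, so they must be fixed before the argument can be considered complete.
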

\begin{proof}
This is a particular case of \cite[\S~4 and \S~5]{CM} that asserts that the $G$-orbits on $G/P\times G/Q$ are in one-to-one correspondence with the double cosets $W_P{\backslash}W/W_Q$ and provides the affine bundle stratification as above. To make computations of the double cosets explicit one can use pictures from \cite{PSV}. Namely, one can take the Hasse diagram of $E_6/P_6$ (which is the same as the Hasse diagram of the weights of the representation $V(\varpi_6)$ since the latter is minuscule) and cut off the edges labelled by $6$ (respectively $1$).
\end{proof}

The Proposition implies that there are three possible mutual positions of lines $L_1, L_2$ (or a points respectively): they can coincide (which means $L_1=L_2 \in R_{K/F}(G_K/P_6)$), be in the general position (which means $(L_1,L_2) \in R_{K/F}((G_K/P_6 \times G_K/P_6) \setminus X)$) or in what we call the special position (which means $(L_1,L_2) \in R_{K/F}(X)$). Similarly, there are three possible mutual positions of a point and a line: they can be incident, in the general position or in what we call the special position. This agrees with ``Hjemslev--Moufang'' description as in \cite{SV} in the split case. Namely, the lines (or points) in the special position are called there ``connected'', and the corresponding condition on lines $K e_1$ and $K e_2$ reads as follows: $e_1\times e_2=0$. Further, a point and a line in the special position are also called ``connected'', and the condition is $T(e_1,e_2)=0$. Finally, a point and a line are incident if and only if $\langle e_1,e_2\rangle=0$, where $\langle \cdot,\cdot\rangle$ is a transformation defined by Freudenthal.

\section{Main theorem}

\begin{theorem}\label{thm:main}
\begin{enumerate}
\item If $L_1$ and $L_2$ are two lines in the general position, then they meet at at most one point. The condition that they meet at exactly one point is open, and if $G$ is anisotropic, the complement to this open subvariety has no $F$-rational points.

\item If $L_1$ and $L_2$ are lines in the special position, then the set $\{H \in \mathcal{Q} \mid H \le L_1 \cap L_2\}$ can be viewed as the set of the $F$-rational points of an affine open subvariety of $R_{K/F}({\mathbb P}^4)$, which is a twisted form of $A_4/A_3\cdot{\mathbb G}_m$.

\item For any line $L$ the set of all points on $L$ can be viewed as
the set of the $F$-rational points of an affine open subvariety of the Weil restriction from $K$ to $F$ of an $8$-dimensional isotropic smooth quadric, which is a twisted form of $D_5/D_4\cdot{\mathbb G}_m$.
\end{enumerate}
\end{theorem}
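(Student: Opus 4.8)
The plan is to reduce all three statements to geometry over the quadratic extension $K$, where $B_K\cong B(A,K\times K)$ is split of type $1$ and $G_K$ has parabolics of types $P_1$ and $P_6$, and then to transfer the answer back to $F$ by Galois descent and Weil restriction. The recurring mechanism, already used in the proof of Proposition~\ref{prop:Points}, is this: over $K$ a point is a line $Ke$ with $e\times e=0$, and $P=\Stab(Ke)$ is a parabolic of type $P_1$; the pair $(Ke,K\sigma(e))$ assembles into an honest quaternion subalgebra $H\in\mathcal{Q}$ exactly when $T(e,\sigma(e))\ne 0$, equivalently when $\sigma(P)$ is opposite to $P$; and when this fails, $P\cap\sigma(P)$ is defined over $F$ and contains unipotent elements (\cite{DG}), whence $G$ is isotropic by \cite{T}. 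So in each part the ``open condition'' is $T(e_1,\sigma(e_1))\ne 0$, and the anisotropic clauses follow from this isotropy dichotomy.

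For part (1), suppose $(L_1,L_2)$ lies in $R_{K/F}\bigl((G_K/P_6\times G_K/P_6)\setminus X\bigr)$; over $K$ the pair is then in the open $G_K$-orbit of Proposition~\ref{prop:CM}, and the affine-bundle map of that proposition sends it to the flag $(P_{L_1}\cap P_{L_2})\cdot R_u(P_{L_1})$ of type $P_{1,6}$, i.e.\ to a pair $(H_0,L_1)$ with $H_0$ a point incident to $L_1$. Using the affine-bundle structure (the fibre being the lines through $H_0$ in general position with $L_1$) one checks that $H_0$ is the unique point of $G_K/P_1$ lying on both $L_i$ — this recovers the count of Vinberg and Atsuyama \cites{A,V} — and, being canonically extracted, it is $K$-rational. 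Hence $L_1$ and $L_2$ meet in at most one point of $\mathcal{Q}$, namely the quaternion subalgebra built from $(Ke,K\sigma(e))$ when $T(e,\sigma(e))\ne 0$; this last condition is open, and if it fails while $G$ is anisotropic the dichotomy above yields a contradiction.

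For parts (2) and (3) I would realise the relevant intersection variety over $K$ as a fibre of one of the Chernousov--Merkurjev maps and identify it. When $(L_1,L_2)$ is in the special position, the middle stratum of the first filtration maps $G_K$-equivariantly onto $G_K/P_{5,6}$, whose Levi is of type $A_4$; the points of $G_K/P_1$ incident to both $L_i$ form a homogeneous space under this Levi, and — $G$ having trivial Tits algebras — this space is the projective space $\mathbb{P}^4=\mathrm{SL}_5/P_1$ over $K$. For a single line $L=\pi(H)$ (Proposition~\ref{prop:Dual}), the points of $G_K/P_1$ on $L$ are the fibre of $G_K/P_{1,6}\to G_K/P_6$, i.e.\ the homogeneous space of the type-$D_5$ Levi of $P_6$ obtained by deleting node $1$; over $\bar K$ this is the split $8$-dimensional quadric $D_5/D_4\cdot\mathbb{G}_m$, and over $K$ it is an $8$-dimensional quadric, isotropic because the elementary inclusion $H\le\pi(H)=L$ (valid since $K^\perp_H$ is spanned by an element whose square lies in $F^\ast$) exhibits a point on it. In each case I would then cut out the affine open locus $T(e_1,\sigma(e_1))\ne 0$ — the complement of a quadric section, hence affine — and apply $R_{K/F}$ to get the stated descriptions over $F$.

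The step I expect to be the main obstacle is the identification in parts (2) and (3): matching the $G_K$-orbits and affine-bundle maps of Proposition~\ref{prop:CM} with the incidence conditions $e_1\times e_2=0$, $T(e_1,e_2)=0$ and $\langle e_1,e_2\rangle=0$ of Section~4, and pinning down the Chernousov--Merkurjev fibres as \emph{exactly} $\mathbb{P}^4$ and the $8$-dimensional quadric rather than merely homogeneous spaces of the right types. Once that is done, the Weil-restriction bookkeeping — including that the twisted forms that occur are the untwisted $\mathbb{P}^4$ and an isotropic quadric — follows from the triviality of the Tits algebras of $G$ together with the existence of the point $H$ on $L$.
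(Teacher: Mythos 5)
Your overall strategy --- pass to $K$, use the Chernousov--Merkurjev strata of Proposition~\ref{prop:CM} to classify mutual positions, identify the intersection locus with a homogeneous space, and handle the open condition and the anisotropic clause via the opposite-parabolic dichotomy of Proposition~\ref{prop:Points} --- is the same as the paper's. But the step you yourself flag as ``the main obstacle'' is precisely the content of the paper's proof, and your proposal does not supply it. Concretely, you assert that the flag of type $P_{1,6}$ produced by the map $(Q_1,Q_2)\mapsto (Q_1\cap Q_2)R_u(Q_1)$ yields \emph{the unique point on both lines} (part (1)) and that the points incident to both lines \emph{form a homogeneous space under the Levi} (part (2)); neither claim is proved, and the second is not even quite the right formulation, since what one needs is a description of the locus as an open subvariety of a projective variety defined over $F$, not merely a homogeneous space over $K$.

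The paper bridges this gap by working with the $10$-dimensional subspaces $V_i=e^{(i)}\times A$ rather than with fibres of the affine-bundle maps. The key inputs you are missing are: (a) the fact that $V_1\cap V_2\ne 0$, which is not formal --- the paper reduces it to the split case over $\overline{F}$ and quotes \cite[Propositions~3.3 and 3.5]{SV}; (b) Garibaldi's description of parabolic subgroups (\cite[Theorem~7.2]{G}), which converts the type of the submaximal parabolic $Q=(Q_1\cap Q_2)R_u(Q_1)$ (namely $P_{1,6}$ or $P_{5,6}$) into the statement that $V_1\cap V_2$ is $1$-dimensional, respectively $5$-dimensional and totally isotropic; and (c) the observation that $V_1\cap V_2$ is $\sigma$-stable, hence descends to an $F$-space $V$ with $K\oplus V$ a subalgebra of $B$ of dimension $4$, $12$ or $22$, whose quaternion subalgebras are exactly the common points. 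With (c) in hand, part (1) becomes the $4$-dimensional case of the argument of Proposition~\ref{prop:Points}; part (2) follows because the cross product vanishes on the $5$-dimensional space $W=V_1\cap V_2$, so the relevant automorphism group is just $\mathrm{GL}(W)$ of type $A_4$ and the points are parametrized by an open subvariety of $R_{K/F}(\mathbb{P}(W))$; and part (3) is the $22$-dimensional case with Levi of type $D_5$. Your identifications of the Levi types ($A_4$ for $P_{5,6}$, $D_5$ with the vector-representation node for $P_6$) and your isotropy argument via $H\le\pi(H)$ are correct and consistent with the paper, but without the subalgebra construction (or an equivalent substitute proving your two deferred claims) the proof is incomplete.
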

\begin{proof}
Let $L_1, L_2$ be two lines. According to the proof of Proposition~\ref{prop:Dual} there exists $(e^{(i)}_1, e_2^{(i)}) \in A\times A$, such that 
\[
(L_i)_K=\begin{pmatrix}K & e^{(i)}_1 \times A \\ 
e_2^{(i)} \times A& K\end{pmatrix},
\]
so each $L_i$ corresponds to the uniquely defined $10$-dimensional subspace $V_i$ over $K$. It is known (see \cite{G}) that their stabilizers $Q_i=\Stab(V_i)$ are parabolic subgroups of type $P_6$ in $G(K)$.
Formulae from \cite[\S~1, $5^\circ$]{PSV} imply that the unipotent radicals $R_u(Q_i)$ act on $V_i$ trivially.

Note that $V_1\cap V_2\ne 0$. In order to prove the claim one may check it over $\overline{F}$, since being equal to zero is a condition that preserves under the Galois descent. So we may assume that $G$ is split. In this case the assertion follows from \cite[Proposition~3.3 and Proposition~3.5]{SV}.

Now define $Q=(Q_1\cap Q_2)R_u(Q_1)$. According to Proposition~\ref{prop:CM}, if $L_1$ and $L_2$ are in the general position then $Q$ is a parabolic subgroup of type $P_{1,6}$, and if $L_1$ and $L_2$ are in the special position, then $Q$ is of type $P_{5,6}$. So $Q$ is submaximal in the sense that any proper overgroup of $Q$ must coincide with $Q_1$ (of type $P_6$) or be a parabolic subgroup of type $P_1$ (in the case of general position) or $P_5$ (in the case of special position).

On the other hand, $Q$ stabilizes the flag $V_1\cap V_2\le V_1$. The description of parabolic subgroups in \cite[Theorem~7.2]{G} implies that $V_1\cap V_2$ is $1$-dimensional (resp. $5$-dimensional) and totally isotropic (that is the restriction of $\times$ on $V_1\cap V_2$ is trivial). \par

Note that $V_1\cap V_2$ is $\sigma$-stable, since $L_1$ and $L_2$ are both defined over the base field $F$. So there exists an $F$-space $V$ such that $V_1\cap V_2=V_K$. Now observe that $K\oplus V$ is a subalgebra of $B_F$, since 
$$
(K\times K)\oplus V_i = \begin{pmatrix}K & e^{(i)}_1 \times A \\ 
e_2^{(i)} \times A& K\end{pmatrix}
$$
is a subalgebra of $B_K$ for $i=1,2$. The set $\{H \in \mathcal{Q} \mid H \in L_1 \cap L_2\}$ is the set of the $F$-rational points variety of quaternion subalgebras of this structurable algebra (of dimension $4$ in the case of general position, of dimension $12$ in the case of special position, and of dimension $22$ when $L_1=L_2$).

In the case of dimension $4$ the subalgebra over $K$ looks like
$$
\begin{pmatrix}
K&Ke\\
K\sigma(e)&K
\end{pmatrix},
$$
and the same argument as in the proof of Proposition~\ref{prop:Points} shows that the condition that this algebra is quaternionic is open, and the complement to this open subvariety has no $F$-rational points if $G$ is anisotropic.

In the case of dimension $12$ the automorphisms group of the algebra is of type $A_4$, since the cross product is trivial and henceforth this automorphisms group consists only of authomorphisms of a $5$-dimensional vector space.

Finally, in the case of dimension $22$ the group $Q$ is equal to $Q_1$, so it has Levi subgroup of type $D_5$, which is isotropic (over $K$). Repeating the proof of Proposition~\ref{prop:Points} with appropriate changes we get the claim.
\end{proof}

\begin{corollary}[Chain Lemma]
Assume that $F$ is infinite. Let $T$ and $T'$ be two microweight tori in $G$ (see Remark~\ref{rem:tori}). Then there exists a sequence of microweight tori $T_i$'s such that $T_0=T$, $T_n=T'$, and $T_i$ commutes with $T_{i+1}$ for any $i=0,\ldots,n-1$ (actually one can take $n=4$).
\end{corollary}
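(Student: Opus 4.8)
The plan is to translate the statement into the geometry of points and lines from the previous sections and to produce the required sequence as a walk of length $4$ in the incidence graph. By Remark~\ref{rem:tori} (and its analogue for $P_6$, obtained through the bijection $\pi$ of Proposition~\ref{prop:Dual}), both quaternion subalgebras (``points'', the set $\mathcal{Q}$) and $22$-dimensional subalgebras (``lines'') determine microweight tori, and --- as recorded after Proposition~\ref{prop:Dual} --- a point and a line give commuting, non‑coinciding tori precisely when the point lies on the line. Hence it suffices to join $T$ and $T'$, which we may take to be two points $H$ and $H'$ (the remaining cases being entirely similar), by a chain of incidences $H \le L_1 \ge H_2 \le L_3 \ge H'$; padded to length $4$ by repeating entries if necessary, this yields $n=4$.

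The device for producing such a chain over $F$ is an elementary existence principle: since $F$ is infinite, every nonempty open subvariety of an $F$‑rational variety has a Zariski‑dense set of $F$‑points; and since $K/F$ is separable quadratic, $R_{K/F}$ carries $K$‑rational varieties to $F$‑rational ones (it sends $\mathbb{A}^n_K$ to $\mathbb{A}^{2n}_F$ and preserves open immersions and dominant morphisms). Applying this to Theorem~\ref{thm:main} and to its versions with the roles of points and lines interchanged: through every $F$‑point there passes an $F$‑rational line, and on every $F$‑rational line there lies an $F$‑rational point (Theorem~\ref{thm:main}(3), the relevant variety being a nonempty open subvariety of $R_{K/F}$ of an $8$‑dimensional isotropic --- hence $K$‑rational --- quadric). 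Now fix an $F$‑rational line $L_1 \ni H$. If $H' \le L_1$, then $H \le L_1 \ge H'$ is already a chain of length $2$; otherwise one must choose an $F$‑rational point $H_2 \le L_1$ admitting an $F$‑rational line $L_3$ through $H_2$ and $H'$. This succeeds whenever $(H_2,H')$ is in general position with the unique line through them an honest line (and unconditionally when $G$ is anisotropic, by the dual of Theorem~\ref{thm:main}(1); also when $(H_2,H')$ is in special position, by the dual of Theorem~\ref{thm:main}(2)). The set of such admissible $H_2$ on $L_1$ contains a nonempty open subvariety of the $F$‑rational variety of honest points of $L_1$: nonemptiness is checked over $\overline F$, where $G$ is split, from the explicit description in \cite{SV} and the fact that any two lines of the Hjelmslev--Moufang plane meet, which makes the rational map from $L_1$ to the variety of lines through $H'$, sending a point to the line it spans with $H'$, dominant, its general member being an honest line. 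Hence an admissible $F$‑rational $H_2$ exists, the chain $H \le L_1 \ge H_2 \le L_3 \ge H'$ is complete, and $n=4$ works.

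The one genuinely delicate point is the isotropic case: there the dual of Theorem~\ref{thm:main}(1) no longer guarantees that two points (equivalently, two lines) in general position are joined by an honest $F$‑rational line, so one cannot in general connect $H$ to $H'$ through a single line, and one must make essential use of the freedom in the choice of $H_2$ (and, if necessary, of $L_1$) together with the density of $F$‑points over the infinite field $F$. Accordingly, the main work lies in the split‑case count showing that the admissible configurations form a dense open locus; granted that, the infinitude of $F$ and the rationality of the ambient Weil restrictions supply the $F$‑rational chain and the bound $n=4$.
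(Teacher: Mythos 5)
Your overall architecture is the paper's: translate the tori into the incidence geometry of points and lines, realize the chain as $H\le L\ge H''\le L'\ge H'$ (so $n=4$), and produce the middle of the chain from open conditions together with density of $F$-rational points on rational parametrizing varieties over the infinite field $F$. The difference lies in the order of choices, and it matters. The paper varies \emph{both} lines simultaneously: by the dual of Theorem~\ref{thm:main}(3) the lines through $H$ (respectively through $H'$) contain a family parametrized by $R_{K/F}(\mathbb{A}^8)\simeq\mathbb{A}^{16}$, namely the affine cell of the isotropic quadric, and on $\mathbb{A}^{16}\times\mathbb{A}^{16}$ the conditions of being in general position and of meeting at exactly one point are open by Theorem~\ref{thm:main}(1); the unique intersection point of a pair satisfying them is Galois-stable, hence an honest $F$-rational point $H''$. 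Everything therefore reduces to the nonemptiness of one open condition on a product of affine spaces plus the fact that such a set has an $F$-point when $F$ is infinite. You instead fix $L_1$ through $H$ first and then vary the point $H_2$ on $L_1$, which obliges you to show that for this \emph{fixed} $L_1$ a generic honest point of $L_1$ is joined to $H'$ by a unique honest line. That is the one real soft spot: your justification — dominance of the join map $H_2\mapsto(\text{line through }H_2\text{ and }H')$, deduced from ``any two lines of the Hjelmslev--Moufang plane meet'' — is asserted rather than proved (meeting of lines does not by itself control the mutual position of the intersection point with $H'$, nor the honesty of the resulting line), and you explicitly defer ``the main work'' to an unperformed split-case count. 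This is a strictly heavier burden than the paper incurs, since the paper never needs the dual joining statements for a fixed line; it only quotes the openness already established in Theorem~\ref{thm:main}(1). (Your parenthetical appeal to the dual of Theorem~\ref{thm:main}(2) in the special-position case is also unjustified: a twisted form of $A_4/A_3\cdot\mathbb{G}_m$ need not have $F$-points.) If you rearrange the argument to choose $L_1$ and $L_3$ generically and simultaneously from the two affine families, rather than $L_1$ first and $H_2$ second, the deferred count disappears and your proof coincides with the paper's.
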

\begin{proof}
Let $H$ and $H'$ be the points corresponding to $T$ and $T'$. Note that an isotropic quadric contains an affine space as an open subvariety. It follows by the dual (in the sense of Proposition~\ref{prop:Dual}) version of Theorem~\ref{thm:main} (3) that there exists a parametrization of (some of the) lines passing through $H$ and lines passing through $H'$ by $F$-rational points of $R_{K/F}(\mathbb{A}^8)\simeq{\mathbb A}^{16}$. Now the condition of being in the general position is open; moreover, for two lines in the general position the condition that they meet at exactly one point is also open by Theorem~\ref{thm:main} (1). Since $F$ is infinite, there exist two lines $L$ and $L'$ passing through $H$ and $H'$ respectively such that they meet at some point $H''$. Now set $T_1$, $T_2$ and $T_3$ to be the tori corresponding to $L$, $H''$ and $L'$ respectively.
\end{proof}

This Corollary leads to an interesting geometry of microweight tori in the spirit of the paper of Vavilov and Nesterov \cite{VN}.

\subsection*{Acknowledgements}

We are grateful to Nikolai Vavilov and Anastasia Stavrova for the attention to our work.

\subsection*{Funding}
Theorem~1 was obtained under the support by Russian Science Foundation grant 20-41-04401. The second author was supported by Young Russian Mathematics award and by ``Native Towns'', a social investment program of PJSC ``Gazprom Neft''. Also the second author is supported in part by The Euler International Mathematical Institute, grant number 075-15-2019-16-20.

\newpage
\thispagestyle{empty}
Viktor Alexandrovich Petrov: St. Petersburg State University, 199178 Saint Petersburg, Russia, Line 14th (Vasilyevsky Island), 29

\smallskip

\texttt{victorapetrov@googlemail.com}

\medskip

Andrei Vyacheslavovich Semenov: St. Petersburg State University, 199178 Saint Petersburg, Russia, Line 14th (Vasilyevsky Island), 29

\smallskip

\texttt{asemenov.spb.56@gmail.com}

\end{document}